\newtheorem{theorem}{Theorem}
\newtheorem{theorem*}{Theorem}
\newtheorem{proposition}[theorem]{Proposition}
\newtheorem{definition}[theorem]{Definition}
\newtheorem{lemma}[theorem]{Lemma}
\theoremstyle{remark}
\newtheorem{remark}[theorem]{Remark}
\newcommand{\dive}{\mathrm{div}}
\newcommand{\grad}{\mathrm{grad}}
\newcommand{\ord}{\mathrm{ord}}
\newcommand{\F}{\mathcal{F}}
\newcommand{\calE}{\mathcal{E}}
\newcommand{\calP}{\mathcal{P}}
\newcommand{\calY}{\mathcal{Y}}
\newcommand{\EE}{\mathbb{E}}
\newcommand{\PP}{\mathbb{P}}
\newcommand{\R}{\mathbb{R}}
\newcommand{\Z}{\mathbb{Z}}
\begin{document}

\title{Nonexistence of finite-dimensional estimation algebras on closed smooth manifolds}

\author{Jiayi Kang, Andrew Salmon and Stephen Shing-Toung Yau ~\IEEEmembership{Life Fellow,~IEEE}
\thanks{ \emph{Corresponding author: Stephen Shing-Toung Yau.}}
\thanks{Jiayi Kang is with Beijing Institute of Mathematical Sciences and Applications, Huairou district, Beijing 101400, China. {(E-mail: kangjiayi@bimsa.cn).}}
\thanks{Andrew Salmon is an independent researcher. Andrew Salmon is the co-first author.
{(E-mail: asalmon@alum.mit.edu).}}
\thanks{Stephen Shing-Toung Yau is with the Beijing Institute of Mathematical Sciences and Applications, Huairou district, Beijing 101400, China and with the Department of Mathematical Sciences, Tsinghua University, Beijing 100084, China.
{(E-mail: yau@uic.edu).}}
}

% make the title area
\maketitle

% As a general rule, do not put math, special symbols or citations
% in the abstract or keywords.
\begin{abstract}
 Estimation algebras have been extensively studied in Euclidean space, where finite-dimensional estimation algebras form the foundation of the Kalman and Benes filters, and have contributed to the discovery of many other finite-dimensional filters.  This work extends the theory of estimation algebras to filtering problems on Riemannian manifolds in continuous time. Our main result demonstrates that, with non-constant observation functions, the estimation algebra associated with the system on closed Riemannian manifolds is infinite-dimensional.
\end{abstract}

% Note that keywords are not normally used for peerreview papers.
\begin{IEEEkeywords}
Finite dimensional filter, Riemannian manifold, Estimation algebra
\end{IEEEkeywords}

% For peer review papers, you can put extra information on the cover
% page as needed:
% \ifCLASSOPTIONpeerreview
% \begin{center} \bfseries EDICS Category: 3-BBND \end{center}
% \fi
%
% For peerreview papers, this IEEEtran command inserts a page break and
% creates the second title. It will be ignored for other modes.
\IEEEpeerreviewmaketitle

\section{Introduction}

The filtering problem, which involves estimating the state of a stochastic dynamical system under noisy observations, has been a major research focus since the 1960s when Kalman and Bucy introduced linear filtering theory for systems with Gaussian initial conditions \cite{kalman1960new,kalman1961new}. This foundational work spurred interest in nonlinear filtering. One important point of view on the continuous time filtering problem is given by the Duncan-Mortensen-Zakai (DMZ) equation \cite{duncan1967probability,mortensen1966optimal,zakai1969optimal}, which gives a stochastic partial differential equation for the probability density function of the state conditioned on observations.

In the 1970s, Brockett \cite{Brockett1980}, Clark\cite{Brockett1981}, and Mitter \cite{Mitter1979} independently advanced the field by developing finite-dimensional filters using the estimation algebra method. The estimation algebra can be unconditionally defined for continuous-time filtering systems as a Lie subalgebra of differential operators on the state space, and it enables the construction of finite-dimensional recursive filters, provided the estimation algebra is finite-dimensional. The estimation algebra approach recasts previously discovered finite-dimensional filters, such as the Kalman-Bucy filter and Bene\c{s} filters, in a common framework. Yau and his collaborators studied the structure of estimation algebras in detail during the 1990s and 2000s \cite{Yau1990,YauRasoulian1999,WuYauHu2002}. They identified a key invariant, the linear rank, the dimension of the vector subspace of the estimation algebra spanned by homogeneous linear functions, and classified all finite-dimensional estimation algebras of maximal rank, when the linear rank is equal to the dimension of the state space \cite{Yau2003}. Since the 2000s, subsequent progress has focused on non-maximal rank estimation algebras in Euclidean space. Wu and Yau classified finite-dimensional estimation algebras (FDEA) with state dimension 2 and rank 1 in 2006 \cite{WuYau2006}. Shi and Yau extended this work by proving the Mitter conjecture, which states that the observation process must be linear in the state, for state dimension 3 and rank 2 \cite{ShiYau2017}. Further, they constructed new classes of finite-dimensional filters (FDFs) with various state dimensions and ranks. Jiao and Yau recently identified a novel class of FDFs not of Yau-type, broadening the understanding of non-maximal rank estimation algebras across different state space dimensions \cite{jiao2020new}. And a time-varying extension of FDEA appeared in \cite{kang2023finite}. However, the extension of FDEA to manifold cases remains an open problem.

Suppose that $(M,g)$ is a Riemannian manifold with a Riemannian metric $g$. The filtering problem of $M$ considered here is based on the signal observation model:
\begin{equation}\label{eq main theorem system}
    \begin{split}
        dX_t &= V_0(X_t) \, dt + \sum_{\alpha=1}^l V_\alpha(X_t) \circ dW^\alpha_t \\
        dY_t &= h(X_t)\, dt + dV_t
    \end{split}
\end{equation}
where $X_t\in M$, $Y_t \in \R^m$,  the $V_0,V_1,\cdots,V_l$ are all smooth vector fields of $M$, $h:M\rightarrow \R^m$ is the smooth function, $W_t^\alpha$ is 1 dimensional standard Brownian motion on $\R$, and $V_t$ is $m$ dimensional standard Brownian motion. 

Our  main result, established in Theorem \ref{thm:non exist} of Section 4, demonstrates the nonexistence of finite-dimensional estimation algebras when the state equations are defined on a closed compact Riemannian manifold, with the stochastic component arising from Brownian motion on the manifold. We prove the following result:

\begin{theorem*}[Main Theorem]\label{Main Theorem}
    Let $M$ be a closed smooth manifold, and let $L$ be a diffusion process on $M$ with a second-order term that is non-degenerate everywhere (in the sense of Definition~\ref{def: non-degenerate diffusion process}). Let $h$ be the observation function of a signal-observation model as in Equation~\eqref{eq main theorem system} and assume that $h$ is not a constant function. Then the estimation algebra (see Definition~\ref{def: estimation algebra}) attached to the signal-observation model is infinite-dimensional.
\end{theorem*}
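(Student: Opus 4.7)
The plan is to exhibit an infinite, linearly independent family inside the estimation algebra $E$ by iterating the adjoint action of the Zakai-type generator against the multiplication operator of a (scalar component of the) non-constant observation $h$. Concretely, I set $T_0 := h$ and recursively $T_k := [L_0, T_{k-1}]$, where $L_0$ is the generator from Definition~\ref{def: estimation algebra}; each $T_k$ lies in $E$ by closure under Lie brackets. I would then show that $T_k$ is a linear partial differential operator of order exactly $k$, so that the family $\{T_k\}_{k \geq 0}$ is linearly independent by a principal-symbol descent on any putative relation $\sum_k c_k T_k = 0$, and hence $E$ is infinite-dimensional.

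The core computation is the principal symbol of $T_k$. Non-degeneracy of the second-order part of $L_0$ (Definition~\ref{def: non-degenerate diffusion process}) means its principal symbol is a fiberwise positive-definite quadratic form $H(x,p)$ on $T^*M$, a genuine Riemannian Hamiltonian. A routine induction, using that the first-order drift and zeroth-order terms of $L_0$ contribute only subprincipally to each bracket, gives $\sigma_k(T_k) = \{H, \cdot\}^k h$, where $h$ is pulled back to $T^*M$ via the projection and $\{\cdot, \cdot\}$ denotes the canonical Poisson bracket. Since $\{H,\cdot\}$ is differentiation along the Hamiltonian flow of $H$, which projects to geodesics of the dual metric, this identifies $\sigma_k(T_k)(x,p)$ with the $k$-th derivative at $t=0$ of $t \mapsto h(\gamma_{x,p}(t))$, where $\gamma_{x,p}$ is the geodesic through $x$ with initial covector $p$. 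If this symbol vanished identically for some $k$, then $h \circ \gamma$ would be a polynomial in $t$ of degree less than $k$ along every geodesic $\gamma$. But $M$ is closed, so $h \circ \gamma$ is bounded; a bounded polynomial is constant, so $h$ is constant along every geodesic. Because the geodesic exponential map is a local diffeomorphism at each point, $h$ is locally constant, and connectedness of $M$ forces $h$ globally constant, contradicting the hypothesis.

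Consequently each $T_k$ has order exactly $k$; since operators of pairwise distinct orders are linearly independent (again by a descending principal-symbol argument), $E$ contains the infinite independent family $\{T_k\}_{k \geq 0}$, and the theorem follows. The main obstacle I anticipate is the symbolic bookkeeping for the recursion: one must verify carefully that the drift vector field and the zeroth-order part of $L_0$ (in particular the $-\tfrac{1}{2}|h|^2$ term characteristic of the DMZ/Zakai formulation) really do contribute only subprincipally at every stage and cannot conspire to cancel the leading symbol, and one must match the precise form of ``non-degeneracy'' recorded in Definition~\ref{def: non-degenerate diffusion process} to the positive-definite Hamiltonian $H$ needed above. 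Given these technical checks, the compactness-plus-connectedness step delivers non-vanishing of the iterated symbol cleanly and replaces the polynomial-growth arguments used for the analogous Euclidean results.
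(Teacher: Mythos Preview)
Your argument is correct and takes a genuinely different route from the paper. The paper iterates a \emph{double} bracket, setting $H_0=h^j$ and $H_{i+1}=[[L_0,H_i],H_i]=\langle\grad H_i,\grad H_i\rangle$, so that every witness stays a function (order zero); linear independence is then read off from an elementary critical-point argument on the compact $M$: the maximum of $H_i$ is attained at some $x_i$ where all higher iterates vanish, giving an upper-triangular evaluation matrix $(H_i(x_j))$. Your single-bracket iteration instead produces operators $T_k$ of strictly increasing order and replaces that combinatorics by the dynamical identification $\sigma_k(T_k)=X_H^k(\pi^*h)$ with the cogeodesic flow, after which compactness enters as completeness plus boundedness to force a polynomial-along-geodesics to be constant. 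The paper's approach is more elementary (no symbol calculus, only existence of maxima) and keeps all witnesses inside $C^\infty(M)$, which dovetails with the gradient-flow argument they use for their non-compact generalization; your approach is more geometric, makes the role of the non-degeneracy hypothesis transparent, and generalizes in a different direction---it goes through verbatim whenever the induced metric is geodesically complete and $h$ is bounded. Your anticipated obstacle about subprincipal terms is harmless: the first- and zeroth-order parts of $L_0$ contribute to $[L_0,T_k]$ only in orders $\le k$, strictly below the order $k+1$ produced by the second-order part, so they cannot cancel the leading symbol at any stage.
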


More generally, we show the following result which shows that in many cases when $M$ is non-compact, the estimation algebra is still infinite-dimensional, especially if $M$ has nontrivial topology.

\begin{theorem*}[Main Theorem II]\label{Generalization of Main Theorem}
    Let $M$ be a possibly non-compact manifold without boundary, and let $L$ be a diffusion process on $M$ with second order term that is non-degenerate everywhere.  Let $h$ be the observation function, and let $x$ and $y$ be distinct critical points of some component $h^i$ of $h$ on $M$ connected by a gradient flow $\gamma \colon \R \rightarrow M$, i.e. $\gamma$ satisfies the differential equation
    \[ \gamma'(t) = (\grad h)(\gamma(t)) \]
    and approaches $x$ at $-\infty$ and $y$ at $+\infty$.  Then the estimation algebra attached to the signal-observation model is infinite-dimensional.
\end{theorem*}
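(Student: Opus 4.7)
The plan is to construct, inside the estimation algebra, an explicit infinite sequence of zero-order (multiplication) operators $g_0, g_1, g_2, \ldots$ such that $g_n$ restricted to $\gamma$ equals the $n$th derivative of $\phi(t) := h^i(\gamma(t))$, and then to argue by elementary ODE theory that these derivatives are linearly independent because of the asymptotic shape of $\phi$.

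The first step is a standard commutator computation. Write $L_0$ for the generator appearing in the estimation algebra; its principal symbol agrees with that of $L$, so locally $L_0 = a^{ij}\partial_i \partial_j + (\text{lower order})$ with $(a^{ij})$ non-degenerate. A direct calculation yields
\[
[[L_0, f], f'] \;=\; \langle \grad f, \grad f' \rangle
\]
as multiplication by a smooth function, the gradient and inner product being those of the Riemannian metric $g$ dual to the symbol of $L_0$; all first- and zero-order terms of $L_0$, as well as the potential $\tfrac{1}{2}|h|^2$, drop out of the double bracket. Setting $g_0 := h^i$ and $g_{n+1} := [[L_0, g_n], h^i]$ then produces a sequence of multiplication operators lying in the estimation algebra, and by induction, using the chain rule together with the hypothesis $\gamma'(t) = \grad h^i(\gamma(t))$, one obtains $g_n(\gamma(t)) = \phi^{(n)}(t)$ for every $n \ge 0$.

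It remains to show the $g_n$ are linearly independent, which reduces to the linear independence of $\phi, \phi', \phi'', \ldots$ as functions on $\R$. Because $\phi'(t) = |\grad h^i|^2(\gamma(t))$ and $\gamma(t)$ is never a critical point of $h^i$ for finite $t$ (otherwise uniqueness of ODE solutions would force $\gamma$ to be constant), $\phi$ is smooth, bounded, and strictly increasing, with $\phi(-\infty) = h^i(x)$ and $\phi(+\infty) = h^i(y)$, where $h^i(x) < h^i(y)$. Any linear relation $\sum_{n=0}^N a_n \phi^{(n)} \equiv 0$ would make $\phi$ a solution of a constant-coefficient linear ODE, hence a quasi-polynomial; boundedness on $\R$ forces this quasi-polynomial to have purely imaginary exponents and constant polynomial factors, reducing it to a real trigonometric polynomial, and the existence of distinct limits at $\pm\infty$ then forces it to be constant, contradicting strict monotonicity.

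The main anticipated obstacle is a bookkeeping issue rather than a conceptual one: verifying that the metric under which $\gamma$ is a gradient flow is indeed the one dual to the symbol of $L_0$, and normalizing conventions for factors of $\tfrac{1}{2}$ between the generator and the Laplacian (and between $L_0$ and $L_0^*$). Once those identifications are pinned down the recursion is immediate, and the ODE conclusion is elementary.
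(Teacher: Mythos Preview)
Your proposal is correct and follows essentially the same route as the paper's proof. The only cosmetic difference is that you work with $\phi(t)=h^i(\gamma(t))$ and its derivatives, while the paper shifts by one index and works with $f(t)=\phi'(t)=|\grad h^i|^2(\gamma(t))$; your endgame (bounded monotone quasi-polynomial must be constant) and the paper's ($f>0$ with $f\to 0$ at both ends cannot be a nonzero quasi-polynomial) are equivalent formulations of the same contradiction.
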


% This theorem addresses the case of compact manifolds. However, the situation for non-compact manifolds, particularly non-compact Lie groups, remains an open and intriguing area of study. In Theorem \ref{thm:abelian lie group} and Remark \ref{rmk: the exist} of Section 4, we extend our investigation to non-compact settings, revealing that a rich theory exists for systems whose state space is a non-compact Lie group or a homogeneous space thereof. Building upon these insights, we now present our second main theorem, which focuses on the DMZ equation of the filtering problem on abelian and non-compact Lie groups:

% \begin{theorem*}[Main Theorem II]
%     Consider DMZ equation of the filtering problem \eqref{eq main theorem system} on an abelian and non-compact Lie group $M$. There are some FDEA systems of this DMZ equation with non constant function $h$.
% \end{theorem*}

A few previous works have explored the equations for filtering stochastic processes on Riemannian manifolds \cite{duncan1977filtering,ng1985nonlinear,solo2009nonlinear}, although to our knowledge, the estimation algebra approach has not been explicitly considered previously. Despite being studied much less compared to their Euclidean state space counterpart, many filtering problems are most naturally phrased on Riemannian manifolds. This is especially true for Lie groups and their homogeneous spaces, which can be used to give a uniform description of configuration spaces. For example, subspace tracking has been explored using Stiefel and Grassmann manifolds \cite{tompkins2007bayesian,srivastava2004bayesian}, offering robust solutions in these domains. Additionally, the manifold of symmetric positive definite matrices has proven instrumental for covariance tracking \cite{snoussi2013particle,hauberg2013unscented}, further highlighting the importance of manifold-based filtering techniques in tackling complex real-world challenges.

In addition to our original findings, it is important to note that while our conclusion has not been previously explored within the context of filtering, related advancements in finite-dimensional representations of stochastic partial differential equations on manifolds—where solutions are represented with finite-dimensional parameters—have been made in recent years \cite{de2018lie,albeverio2019some}. Our conclusion indicates that in the case of compact Riemannian manifolds, a finite-dimensional filter similar to the Kalman filter does not exist. This suggests that developing numerical algorithms suitable for infinite-dimensional filtering is crucial for addressing filtering problems on closed Riemannian manifolds.

\section{Filtering equations on a Riemannian manifold}

\subsection{Preliminaries and notation}

Let $(M, g)$ be a compact Riemannian manifold of dimension $n$, and let $\omega$ be the canonical measure $d\omega = \sqrt{|g|} dx^1 \wedge \cdots \wedge dx^n$.  In our setup, all functions and operators will be over the real numbers.  Let $C^\infty(M) = \Gamma(M)$ be the smooth functions on $M$, which we view as the (real-valued) smooth sections of the trivial line bundle.  The space of vector fields is the global sections $\Gamma(M, TM)$ of the tangent bundle.  Any vector field defines a linear operator on smooth functions by differentiation, which we denote $X(f)$ for $X \in \Gamma(M, TM), f \in \Gamma(M)$.  The smooth differential operators $D^{\infty}(M)$ are the noncommutative algebra over $C^\infty(M)$ of operators $C^{\infty}(M) \rightarrow C^{\infty}(M)$ generated by vector fields.  The differential operators are filtered by order, denoted $\ord(D)$ for any differential operator $D$, where the differential operators of order $0$ are the smooth functions acting on smooth functions by multiplication.  Differential operators are graded-commutative in the sense that for $D_1, D_2 \in D^{\infty}(M)$, $\ord(D_1+D_2) \le \max(\ord(D_1), \ord(D_2))$, $\ord(D_1 D_2) \le \ord(D_1) + \ord(D_2)$, and $\ord(D_1 D_2 - D_2 D_1) \le \ord(D_1) + \ord(D_2) - 1$ if $D_1$ and $D_2$ do not commute.  In the course of what follows, we will also use the commutator on differential operators $[D_1, D_2] = D_1 D_2 - D_2 D_1$ which satisfies the usual property of a Poisson bracket.

The Riemannian metric $g$ induces a pairing $\langle \cdot, \cdot \rangle_g \colon \Gamma(M, TM) \times \Gamma(M, TM) \rightarrow \Gamma(M)$ between vector fields and induces an isomorphism between the tangent bundle and the cotangent bundle.  This allows us to transfer the natural $1$-form $df$ for any smooth function to a vector field to a vector field $\grad_g f \in \Gamma(M, TM)$.  For any vector field, $X$, we can characterize the divergence $\dive_g X \in \Gamma(M)$ by
\[ \int (\dive X) f d\omega = \int \langle X, (\grad f) \rangle d\omega = \int X(f) d\omega. \]
The Laplace-Beltrami operator is $\Delta_{M,g} f = \dive_g \grad_g f$, and it is a fundamental example of a smooth second-order elliptic differential operator on $M$ and is self-adjoint. We will omit the $g$ in the gradient operator or divergence operator when the underlying Riemannian structure is clear.

Let $T^{(n,m)} M$ be the vector bundle of $(r,s)$-tensors so that in particular the fiber over $x \in M$ is canonically identified with $T^{\otimes n}_x M \otimes (T^*)^{\otimes m}_x M$.  We use the notation $T^{n} M = T^{(n, 0)} M$.  The Riemannian metric extends to a pairing $T^{(n,m)}_x M \otimes T^{(n,m)}_x M \rightarrow \R$ and a pairing on sections as $\langle \cdot, \cdot \rangle \colon \Gamma(M, T^n M \otimes T^{*m} M) \times \Gamma(M, T^n M \otimes T^{*m} M) \rightarrow \Gamma(M)$.  The covariant derivative gives a linear operator $\nabla \colon \Gamma(M, T^{(n,m)}M) \rightarrow \Gamma(M, T^{(n,m+1)}M)$ that extends the operator $d$ sending a function to a $1$-form. 

\section{Background on filtering on closed manifolds}

We recall the filtering equations, following the exposition of Bain and Crisan \cite[Chapter~3]{bain2009fundamentals}.  Let $\calP(M)$ denote the set of probability measures on $M$.  Let $(\Omega, \F, \PP)$ be a probability space with filtration $(\F_t)_{t \ge 0}$ so that $\F_t$ is complete, right-continuous, and contains all $\PP$-null sets.  Let $\EE$ denote expectation with respect to the probability measure $\PP$.

\subsection{The Riemannian metric attached to a non-degenerate diffusion process}
{A general stochastic differential equation in Stratonovich formulation has the form:
\begin{equation}\label{eq system}
   dX_t =   V_0(X_t) \, dt + \sum_{\alpha=1}^l V_\alpha(X_t) \circ dW^\alpha_t, 
\end{equation}
where the $V_0,V_1,\cdots,V_l$ are all smooth vector fields of $M$, i.e. $V_0,V_1,\cdots,V_l \in \Gamma(M,TM)$ and $W_t^\alpha$ are standard 1 dimensional Brownian motion on $\R$. 
We consider diffusion processes $X_t$ generated by $L = \frac{1}{2} \sum_{i=1}^l V_\alpha^2 + V_0$, in our setting a smooth second-order elliptic differential operator.

\begin{definition}\label{def: non-degenerate diffusion process}
A non-degenerate diffusion process on the probability space $(\Omega, \F, \PP)$ is a process $M^{\phi}$ such that for any smooth function $\phi \in C^\infty(M)$, the process $M^{\phi}_t$ defined by
\[ M^{\phi}_t = \phi(X_t) - \phi(X_0) - \int_0^t L\phi(X_s) ds \]
is an $\F_t$-adapted martingale, and $L$ is a second-order differential operator with smooth coefficients such that the second-order part is everywhere non-degenerate.
\end{definition}
We remark that for $L$ an elliptic operator, the existence of diffusion processes can be established as in \cite[Section~1.3]{hsu2002stochastic}.}  Given an $L$, we can choose a 'good' Riemannian metric $g$ to give our operator some extra structure.

\begin{lemma}\label{lemma lap}
    Consider a given diffusion process $X_t$ on $M$ generated by a non-degenerate $L$. There exists a new Riemannian metric $\tilde{g}$ such that the degree 2 part of $L$ is $\frac{1}{2}\Delta_{M,\tilde{g}}$.
\end{lemma}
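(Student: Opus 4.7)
The plan is to construct $\tilde g$ directly from the principal symbol of $L$ and then verify that the Laplace--Beltrami operator of this metric has the same order-2 part as $2L$. First, I would work in local coordinates $(x^1, \dots, x^n)$ and write $L = a^{ij}(x) \partial_i \partial_j + b^i(x) \partial_i + c(x)$ with $a^{ij}$ symmetric. The key preliminary claim is that the collection $\{a^{ij}\}$ glues to a globally well-defined symmetric section of $T^{(2,0)} M$. I would verify this by giving a coordinate-free description of the symbol, e.g.\ by observing that for any $f, h \in C^\infty(M)$ the quantity
\[ \sigma_L(df, dh) := \tfrac{1}{2}\bigl(L(fh) - f\,Lh - h\,Lf + fh \cdot L(1)\bigr) \]
depends only on $df$ and $dh$ pointwise (the first-order terms of $L$ cancel), and in local coordinates equals $a^{ij}\partial_i f\, \partial_j h$. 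Thus $\sigma_L$ is a globally defined symmetric $(2,0)$-tensor whose coordinate components are exactly $a^{ij}$.

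Next, I would use the Stratonovich representation $L = \tfrac{1}{2}\sum_\alpha V_\alpha^2 + V_0$ to compute $\sigma_L = \tfrac{1}{2}\sum_\alpha V_\alpha \otimes V_\alpha$; this is manifestly positive semi-definite, and the non-degeneracy hypothesis of Definition~\ref{def: non-degenerate diffusion process} upgrades it to positive definite at every point. Consequently $\tilde g^{ij}(x) := 2a^{ij}(x)$ is a smooth, everywhere positive-definite, symmetric contravariant 2-tensor. Inverting fiberwise produces a smooth Riemannian metric $\tilde g_{ij}$ on $M$, and by construction $(\tilde g)^{-1}$, as an element of $\Gamma(M, T^{(2,0)} M)$, equals $2\sigma_L$.

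Finally, I would compare principal symbols. In local coordinates the Laplace--Beltrami operator expands as
\[ \Delta_{M,\tilde g} f = \frac{1}{\sqrt{|\tilde g|}}\,\partial_i\!\bigl(\sqrt{|\tilde g|}\,\tilde g^{ij} \partial_j f\bigr) = \tilde g^{ij}\partial_i\partial_j f + (\text{order } \le 1\text{ terms in } f), \]
so $\tfrac{1}{2}\Delta_{M,\tilde g}$ has principal part $a^{ij}\partial_i\partial_j$, matching that of $L$. This is exactly the assertion of the lemma.

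I expect the only real obstacle to be the global tensoriality and positive definiteness of the symbol; once those are in hand the identification of principal parts is a one-line computation in coordinates. The coordinate-free symbol formula above neatly handles the tensoriality, and the Stratonovich structure together with non-degeneracy handles the positive definiteness.
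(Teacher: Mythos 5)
Your proposal is correct and follows essentially the same route as the paper: take the (matrix of the) second-order coefficients of $L$, invert it fiberwise to define $\tilde g$, and check in coordinates that $\tfrac{1}{2}\Delta_{M,\tilde g}$ reproduces the principal part of $L$ modulo first-order terms. Your two preliminary verifications --- that the symbol $\sigma_L$ is a globally defined $(2,0)$-tensor and that the Stratonovich form $L=\tfrac{1}{2}\sum_\alpha V_\alpha^2+V_0$ forces it to be positive definite rather than merely invertible --- are points the paper's proof passes over silently, and including them makes the argument more complete.
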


\begin{proof}
In local coordinates $x_1, \cdots, x_n$, we can represent the  degree 2 part as 
\[
\text{degree 2 part of}(L) =\frac{1}{2} \sum_{i,j=1}^n a_{i,j}(x)\frac{\partial^2}{\partial x_i\partial x_j}.
\]
Since $L$ is non-degenerate, we may write $A(x)=(a_{i,j}(x))_{i,j=1}^{n}\in \R^{n\times n}$ as a invertible matrix for any $x\in M$. so we can construct the inverse of $A(x)$ as $G(x)=(g_{i,j}(x))_{i,j=1}^n$. Then, we can construct a $2$-form $\tilde{g}$ as 
\[
\tilde{g} = \sum_{i,j=1}^ng_{ij}dx_idx_j.
\]
The Laplace-Beltrami operator of $\tilde{g}$ is given by
\[
\Delta_{M,\tilde{g}}  = \sum_{i,j=1}^n\frac{1}{\sqrt{\text{det}(G)}} \frac{\partial}{\partial x_j}\left(\sqrt{\text{det}(G)}a_{i,j} \frac{\partial }{\partial x_i}\right).
\]
Here $\text{det}(G)$ is the determinant of the matrix $G$. We have
\[
\frac{1}{2}\Delta_{M,\tilde{g}}  = \frac{1}{2}\sum_{i,j=1}^na_{i,j} \frac{\partial^2 f}{\partial x_i \partial x_j} + b^i \frac{\partial }{\partial x_i},
\]
where
\[
b^i = \sum_{j=1}^n\frac{1}{\sqrt{G}} \frac{\partial(\sqrt{G}a_{i,j}(x))}{\partial x_j} = \sum_{j,k=1}^na_{jk}\Gamma^i_{jk},
\]
where $\Gamma^i_{jk}$ are the Christoffel symbols of the metric $\tilde{g}$ in the local coordinates. 
\end{proof}

\subsection{The DMZ equation on Riemannian manifolds}

{As seen above, for any given non-degenerate diffusion system on a compact smooth manifold without boundary as in Equation~\eqref{eq system}, we can choose a metric $g$ of $M$ as Lemma \ref{lemma lap} such that the generator $L$ of the diffusion process can be decomposed into a Laplacian part $\frac{1}{2}\Delta_M$ and a degree $1$ vector field part $F \in \Gamma(M,TM)$, i.e.,
\begin{equation}\label{eq def of L}
    L = \frac{1}{2}\Delta_{M,g} + F.
\end{equation}}
For simplicity, we donate $\Delta_{M,g} = \Delta_{M}$ in the rest of the paper.  The aim of this section is to generalize the construction of the estimation algebra to general $M$, allowing $M$ to be non-compact, so that we recover the usual theory for $M = \R^n$.

{
In the case that $L$ is $\Delta_M / 2$, then the diffusion process $X_t$ is called Brownian motion on the Riemannian manifold $M$.}

{
\begin{remark}[Relation to continuous filtering literature in Euclidean space]
    When considering $M = \R^n$, condition \eqref{eq def of L} will allow us to write the diffusion process $X_t$ as a stochastic integral. For arbitrary manifolds, we may consider this condition as giving a stochastic integral condition on $X_t$. We will sometimes find it convenient to consider the Stratonovich integrals rather than It\^o integrals in order to use the chain rule. At the same time, on manifolds, these two types of stochastic integrals can still be freely converted between each other, which can be easily seen as in \cite[Chapter~1]{hsu2002stochastic} by taking an isometric embedding into Euclidean space and taking smooth extensions of functions on the manifold.
\end{remark}}

The diffusion process $X_t$ gives the equations of motion for the state process.  We will also consider an observation process which satisfies its own evolution equation.  To simplify the exposition, let $h \colon M \rightarrow \R^m$ be a smooth function for compact $M$ or at most linear growth for $M=\R^n$, which in our setting guarantees that certain boundedness conditions (e.g. Novikov's condition) automatically hold.  We let $h^i \colon M \rightarrow \R$ be the $i$th coordinate function of $h$.

Let $W_t$ be an $\F_t$-adapted standard Brownian motion on $\R^m$ and $Y_t$ the process satisfying the evolution equation
\begin{equation}\label{eq observation}
    Y_t = Y_0 + \int_0^t h(X_s) dt + W_t.
\end{equation}
The process $Y_t$ determines an augmented filtration $\calY_t \subset \F_t$.  In this setting, there is an optional $\calY_t$-adapted probability-valued measurable process $\pi_t$ that solves the filtering problem, so that for any Borel-measurable function $\phi \colon M \rightarrow \R$,
\[ \pi_t \phi = \EE\left[ \phi(X_t) | \calY_t \right]. \]

\begin{theorem}[Theorem 3.24 in \cite{bain2009fundamentals}, or in \cite{gyongy1997stochastic}]
    Consider a filtering problem on a compact Riemannian manifold, where the system equation satisfies \eqref{eq system} and the observation equation satisfies \eqref{eq observation}. Its unnormalized density function satisfies the following DMZ equation:
    \begin{equation}
\begin{split}
\rho_t(\varphi) = \pi_0(\varphi) + \int_0^t \rho_s(A \varphi) ds + \sum_{i=1}^m\int_0^t \rho_s(\varphi h^i) dY_s^i,
\end{split}
\end{equation}
which holds $\tilde{\PP}$-a.s., for all times $t \ge 0$ for any smooth test function $\varphi$.
\end{theorem}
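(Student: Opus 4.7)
The plan is to follow the classical Girsanov/Kallianpur-Striebel derivation of the Zakai equation, substituting the martingale-problem characterization of Definition~\ref{def: non-degenerate diffusion process} for the usual It\^o calculus on an explicit SDE so that the argument transfers to the manifold. First I would introduce a reference probability measure $\tilde{\PP}$ via
\[ \frac{d\PP}{d\tilde{\PP}}\bigg|_{\F_t} = Z_t := \exp\left(\sum_{i=1}^m \int_0^t h^i(X_s)\, dY_s^i - \tfrac{1}{2}\int_0^t |h(X_s)|^2\, ds\right). \]
Compactness of $M$ and smoothness of $h$ make $h$ bounded, so Novikov's condition holds automatically and $Z_t$ is a true $\tilde{\PP}$-martingale; under $\tilde{\PP}$, $Y_t - Y_0$ is a standard $\R^m$-valued Brownian motion independent of $X_t$, while the law of $X_t$ is unchanged. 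Kallianpur-Striebel then identifies the unnormalized conditional distribution as $\rho_t(\varphi) = \tilde{\EE}[\varphi(X_t)\, Z_t \mid \calY_t]$.

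Next I would apply the It\^o product rule to $\varphi(X_t) Z_t$ under $\tilde{\PP}$. By Definition~\ref{def: non-degenerate diffusion process}, $M^\varphi_t := \varphi(X_t) - \varphi(X_0) - \int_0^t L\varphi(X_s)\, ds$ is an $\F_t$-martingale, while $dZ_t = Z_t \sum_i h^i(X_t)\, dY_t^i$. Because $Y - Y_0$ is under $\tilde{\PP}$ a Brownian motion independent of $X$, the cross-variation $\langle M^\varphi, Y^i\rangle$ vanishes, and the It\^o product formula gives
\[ d(\varphi(X_t) Z_t) = Z_t L\varphi(X_t)\, dt + Z_t \varphi(X_t)\sum_{i=1}^m h^i(X_t)\, dY_t^i + Z_t\, dM^\varphi_t. \]
Integrating from $0$ to $t$ and taking $\tilde{\EE}[\cdot \mid \calY_t]$ on both sides reduces the theorem to three sub-tasks: identifying each of the right-hand-side pieces after conditioning.

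The main obstacle is this projection step. For the $ds$-integral, ordinary Fubini combined with the identity $\tilde{\EE}[Z_s L\varphi(X_s) \mid \calY_t] = \tilde{\EE}[Z_s L\varphi(X_s) \mid \calY_s] = \rho_s(L\varphi)$ gives $\int_0^t \rho_s(L\varphi)\, ds$; the first equality holds because $Z_s L\varphi(X_s)$ is $\F_s$-measurable and the increments $Y_u - Y_s$ for $u > s$ are $\tilde{\PP}$-independent of $\F_s$. For the $dY^i$-integral the conditional stochastic Fubini theorem yields $\int_0^t \rho_s(\varphi h^i)\, dY_s^i$; applicability requires only that the integrand be $\F_s$-predictable and square-integrable, which follows from the uniform bounds on $h$ and $\varphi$ over compact $M$. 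Finally, the error term $\tilde{\EE}[\int_0^t Z_s\, dM^\varphi_s \mid \calY_t]$ vanishes: $\int_0^\cdot Z_s\, dM^\varphi_s$ is a $\tilde{\PP}$-martingale strongly orthogonal to $Y$ (since $\langle M^\varphi, Y^i\rangle = 0$), so it is uncorrelated with every Dol\'eans exponential of the form $\exp(\int f(s)\, dY_s - \tfrac{1}{2}\int |f|^2\, ds)$; by density of such exponentials in $L^2(\calY_t)$ its conditional expectation vanishes. With $A = L$ from Equation~\eqref{eq def of L}, this yields the stated DMZ equation; the role of compactness of $M$ is solely to make all required uniform integrability estimates automatic, so the Euclidean argument of Bain-Crisan transfers with only the replacement of It\^o's formula on an SDE by the martingale problem built into Definition~\ref{def: non-degenerate diffusion process}.
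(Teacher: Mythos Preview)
The paper does not supply its own proof of this statement: it is quoted as a known result from Bain--Crisan (Theorem~3.24) and Gy\"ongy, and the authors move on immediately. Your outline is precisely the classical Girsanov/Kallianpur--Striebel derivation that Bain--Crisan give in Chapter~3, with the It\^o formula for $\varphi(X_t)$ replaced by the martingale-problem identity of Definition~\ref{def: non-degenerate diffusion process}; this is the natural and correct adaptation to the manifold setting, and the compactness of $M$ indeed trivializes the integrability hypotheses (Novikov, square-integrability of integrands) exactly as you say.

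One small point worth making explicit in a full write-up: the assertion that $\langle M^\varphi, Y^i\rangle = 0$ (equivalently, that the law of $X$ is unchanged under $\tilde{\PP}$ and that $Y-Y_0$ is $\tilde{\PP}$-independent of $X$) relies on the signal noise $W^\alpha$ in \eqref{eq main theorem system} being independent of the observation noise $V_t$. This is implicit in the paper's setup but is the hypothesis that makes the cross-variation vanish; without it there would be an extra correction term in the Zakai equation. Otherwise the argument is sound.
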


\begin{definition}[$H^k(M)$ on Riemannian Manifolds]
Let $(M, g)$ be a smooth closed Riemannian manifold of dimension $n$. For $k \in \mathbb{N}$, we define the reflexive Sobolev space $H^k(M)$ as:
\[
\begin{split}
H^k(M) = \Bigg\{f \in L^2(M) : &\nabla^j f \in L^2(M,{T^*}^{j}M) \text{in weak sense,} \\ & \text{ for all } 
 1\leq j \leq k\Bigg\}    
\end{split}
\]
where $\nabla^j f$ denotes the $j$-th covariant derivative of $f$ with respect to the Levi-Civita connection associated with the metric $g$. The Sobolev norm for $f \in H^k(M)$ is defined as:
\[
\|f\|_{H^k(M)} = \left(\sum_{j=0}^n \|\nabla^j f\|_{L^2(M)}^2\right)^{1/2}
\]
Moreover, $H^k(M)$ is a Hilbert space with the inner product:
\[
\langle f, g \rangle_{H^n(M)} = \sum_{j=0}^n \int_M \langle \nabla^j f, \nabla^j g \rangle \, d\omega
\]
where $d\omega$ is the canonical measure and $\langle \cdot, \cdot \rangle$ is the inner product induced by the metric $g$ on tensor fields.
\end{definition}

More importantly, we have the following theorem.

\begin{theorem}[Theorem 3.1 in \cite{gyongy1997stochastic}]
Assume that the conditional law $\pi_0$ is absolutely continuous with respect to the canonical measure $\omega$, and that $p_0 := d\pi_0/d\omega$ belongs to $C^{\infty}(M)$ for some integer $m > 0$. Then for every $t > 0$ the conditional distribution $\pi_t$, given the observations $Y_s$ for $0 \leq s \leq t$ has a density with respect to $w$. Moreover, $p = (p(t,x)) \in C^1([0,\infty), H^{m+1}(M))$ and
\[
p(t,x) = \sigma(t,x) \left(\int_M \sigma(t,x)dw\right)^{-1},
\]
where $\sigma$ is the unique generalized solution of the problem,
\begin{equation}\label{eq DMZ Ito form}
\begin{aligned}
d\sigma(t, x) &= L^* \sigma(t, x) dt + \sum_{i=1}^m L_i \sigma(t, x) dY^i_t, \\
\sigma(0, x) &= \pi_0,
\end{aligned}
\end{equation}
where $L^*$ is adjoint operator of $L$ and $L_i$ is the zero degree
differential operators of multiplication by $h^i$ for any $1\le i\le m$.
\end{theorem}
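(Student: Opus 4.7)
The plan is to follow the classical Girsanov/Kallianpur--Striebel strategy, adapted to the Riemannian setting using the Laplace--Beltrami-based decomposition $L = \tfrac{1}{2}\Delta_M + F$ from Lemma~\ref{lemma lap}, together with Sobolev estimates on the closed manifold $M$. The key point is that compactness of $M$ removes most of the growth hypotheses that complicate the Euclidean theory, while the DMZ/Zakai equation itself is a linear stochastic PDE whose coefficients $L^*$ and $h^i$ are smooth and bounded.

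First I would introduce the reference probability measure $\tilde{\PP}$ via Girsanov's theorem so that under $\tilde{\PP}$ the observation process $Y_t$ becomes a Brownian motion independent of $X_t$; since $h$ is smooth on a compact $M$ it is bounded, and Novikov's condition is trivially satisfied, so the Radon--Nikodym derivative
\[ Z_t = \exp\!\left( \sum_{i=1}^m \int_0^t h^i(X_s)\,dY_s^i - \tfrac{1}{2}\int_0^t |h(X_s)|^2\,ds \right) \]
is a genuine martingale. The Kallianpur--Striebel formula then identifies the unnormalized conditional measure by $\sigma_t(\varphi) = \tilde{\EE}[\varphi(X_t) Z_t^{-1} \mid \calY_t]$ (with the usual sign conventions), and one obtains $\pi_t \varphi = \sigma_t(\varphi)/\sigma_t(1)$. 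Applying It\^o's formula to $\varphi(X_t) Z_t^{-1}$ and taking conditional expectations yields the weak form of Equation~\eqref{eq DMZ Ito form}.

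Next I would upgrade $\sigma_t$ from a measure-valued process to a density. Starting from $\sigma_0 = p_0 \in C^\infty(M)$, the plan is to construct the density $\sigma(t,x)$ as the unique generalized solution in $C([0,T], H^{m+1}(M))$ of \eqref{eq DMZ Ito form} by a standard Galerkin scheme: project onto the finite-dimensional eigenspaces of $\Delta_M$ (whose eigenfunctions form an orthonormal basis of $L^2(M)$ and a Schauder basis of every $H^k(M)$ by elliptic regularity on closed manifolds), solve the resulting finite-dimensional It\^o SDE, and pass to the limit. The crucial analytic ingredient is an It\^o energy estimate of the form
\[ d\|\sigma(t)\|_{H^k}^2 \le C \|\sigma(t)\|_{H^k}^2\,dt + (\text{martingale part}), \]
which follows because $L^* = \tfrac{1}{2}\Delta_M + F^*$ is uniformly elliptic with smooth coefficients and $L_i$ is multiplication by $h^i \in C^\infty(M)$, so the integration-by-parts errors are controlled by lower-order Sobolev norms; Gronwall then bounds $\tilde{\EE}\|\sigma(t)\|_{H^k}^2$ uniformly in $t\le T$ and in the Galerkin dimension, for every $k \le m+1$. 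Uniqueness follows by the same energy estimate applied to the difference of two solutions, and identifying this generalized solution with the functional $\varphi \mapsto \sigma_t(\varphi)$ above shows $\sigma_t$ indeed admits a density, which by construction lies in $H^{m+1}(M)$ and by Sobolev embedding (for $m$ large) in $C^j(M)$.

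Finally one shows $\sigma_t(1) = \int_M \sigma(t,x)\,d\omega > 0$ almost surely, so the ratio $p(t,x) = \sigma(t,x)/\sigma_t(1)$ is well defined, and one identifies $p(t,\cdot)$ as the density of $\pi_t$ by testing against arbitrary $\varphi \in C^\infty(M)$ and invoking Kallianpur--Striebel once more. The time regularity $p \in C^1([0,\infty), H^{m+1}(M))$ is read off from the SPDE \eqref{eq DMZ Ito form} together with the Sobolev bounds on $\sigma$. The main obstacle throughout is the stochastic parabolic regularity step: one must choose the Galerkin basis and the energy functional compatibly with the Riemannian metric so that covariant derivatives of the drift term $F\sigma$ and multiplication terms $h^i \sigma$ are absorbed into the leading $\tfrac{1}{2}\Delta_M$ term; the fact that $M$ is closed is what makes these global estimates clean, since boundary terms vanish and $\Delta_M$ has compact resolvent. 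Everything else is a formal transcription of Gy\"ongy's Euclidean argument with $\partial_i$ replaced by $\nabla$.
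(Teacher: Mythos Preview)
The paper does not give its own proof of this statement; it is quoted verbatim as Theorem~3.1 of \cite{gyongy1997stochastic} and used as a black box to set up the DMZ equation on $M$. There is therefore nothing in the paper to compare your argument against. Your outline---Girsanov change to the reference measure $\tilde{\PP}$, Kallianpur--Striebel representation of the unnormalized filter, then a Galerkin approximation in the Laplace--Beltrami eigenbasis together with an $H^k$ It\^o energy estimate and Gronwall---is the standard route for results of this type and is in the spirit of Gy\"ongy's Euclidean argument; compactness of $M$ does indeed trivialize Novikov, gives discrete spectrum for $\Delta_M$, and removes boundary terms in the energy identity, so the adaptation is mostly formal.

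One genuine gap in your last paragraph: the claimed time regularity $p \in C^1([0,\infty), H^{m+1}(M))$ cannot be ``read off'' from the It\^o SPDE~\eqref{eq DMZ Ito form}. A solution driven by the stochastic integrals $\int L_i\sigma\,dY^i$ is in general only H\"older continuous in $t$ (exponent $<\tfrac12$), not differentiable. The $C^1$ assertion as stated is almost certainly a transcription slip in the paper's quotation of Gy\"ongy (note also the incongruous hypothesis ``$p_0 \in C^\infty(M)$ for some integer $m>0$''), and you should not try to prove it as written; the correct conclusion from your energy estimates is $\sigma \in C([0,T], H^{m+1}(M))$ almost surely, or continuity in a weighted space. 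Also a minor sign: in Kallianpur--Striebel the unnormalized filter is $\tilde{\EE}[\varphi(X_t) Z_t \mid \calY_t]$, not $Z_t^{-1}$, under the usual convention $d\PP = Z_t\,d\tilde{\PP}$.
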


Next we will transform the equation \eqref{eq DMZ Ito form} from Ito form to Stranovich form as follows,
\begin{equation}\label{eq DMZ Stra form}
\begin{aligned}
d\sigma(t, x) &= L_0 \sigma(t, x) dt + \sum_{i=1}^m L_i\sigma(t, x)\circ dY^i_t, \\
\sigma(0, x) &= \pi_0
\end{aligned}
\end{equation}
where $L_0:= L^*-\frac{1}{2}h^\top h$.

We now recall an exponential transform originally introduced by Davis \cite{davis1980multiplicative} adapted to our setting, which has been extensively analyzed in the case that $M$ is Euclidean space.  Define a new unnormalized density by
\[ u(t, x) = \exp\left( - \sum_{i=1}^m h^i(x) Y^i_t \right) \sigma(t, x). \]
\begin{proposition}
    The density $u \colon \R \times M \rightarrow \R$ satisfies the following partial differential equation.
    \begin{equation}
        \begin{aligned}
            \frac{\partial u}{\partial t} &= L_0 u + \sum_{i=1}^m Y^i_t [L_0, L_i] u + \frac{1}{2} \sum_{i,j=1}^m Y^i_t Y^j_t [[L_0, L_i], L_j] u, \\
            u(0, x) &= \pi_0.
        \end{aligned}
    \end{equation}
\end{proposition}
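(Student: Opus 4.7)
The plan is to apply the Stratonovich chain rule to the product $u = E\sigma$, where $E(t, x) = \exp\bigl(-\sum_{i=1}^m h^i(x) Y^i_t\bigr)$, and then rewrite the resulting drift as a differential operator acting on $u$ by conjugating $L_0$ with the multiplication operator $E$. Working throughout with the Stratonovich form of the DMZ equation \eqref{eq DMZ Stra form} is essential here: it makes the computation formally identical to classical calculus and avoids the quadratic variation corrections that would otherwise have to be tracked in It\^o form.

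First I would compute $\circ dE$ by the Stratonovich chain rule. Since $E$ depends smoothly on $x$ and on the continuous semimartingale $Y_t$, and $h^i$ is smooth in $x$, this yields $\circ dE = -\sum_i h^i E \circ dY^i_t = -\sum_i L_i E \circ dY^i_t$, using that $L_i$ is multiplication by $h^i$. Then I would apply the Stratonovich product rule to $u = E\sigma$ and substitute \eqref{eq DMZ Stra form} to obtain
\[
\circ du = -\sum_{i=1}^m L_i E \sigma \circ dY^i_t + E L_0 \sigma\, dt + \sum_{i=1}^m E L_i \sigma \circ dY^i_t.
\]
The decisive simplification is that $E$ and each $L_i$ are both zeroth-order operators (multiplication by smooth functions) and therefore commute pointwise, so $L_i E \sigma = E L_i \sigma$ and the two stochastic integrals cancel exactly. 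Hence $u$ is absolutely continuous in $t$ with $\partial_t u = E L_0 \sigma = (E L_0 E^{-1}) u$.

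The remaining task is to expand the conjugated operator $E L_0 E^{-1}$ into an explicit differential operator on $M$. Letting $H$ denote multiplication by $\sum_i h^i Y^i_t$, the Hadamard lemma gives $e^{-H} L_0 e^{H} = L_0 + [L_0, H] + \tfrac{1}{2}[[L_0, H], H] + \cdots$. The main structural observation, and essentially the only nontrivial point in the argument, is that this series terminates after the quadratic term: since $L_0$ has order at most two and $H$ has order zero, each commutator with $H$ reduces the order by one, so $[[[L_0, H], H], H] = 0$. Expanding $H = \sum_i Y^i_t L_i$ in the two surviving brackets produces precisely the coefficients $Y^i_t$ and $Y^i_t Y^j_t$ multiplying $[L_0, L_i]$ and $[[L_0, L_i], L_j]$, which yields the asserted PDE; the initial condition $u(0, \cdot) = \pi_0$ follows from $E(0, \cdot) \equiv 1$ under the convention $Y_0 = 0$.
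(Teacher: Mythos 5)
Your proposal is correct and follows essentially the same route as the paper: apply the Stratonovich product/chain rule to $u = e^{-K}\sigma$ with $K = \sum_i h^i Y^i_t$, observe that the stochastic integrals cancel because the exponential factor and each $L_i$ are multiplication operators, and then expand the conjugated operator $e^{-K} L_0 e^{K}$ by the Hadamard/BCH formula, which truncates after the quadratic term since each bracket with a zeroth-order operator drops the order by one and $L_0$ has order two. The paper's proof is the same computation, so no further comparison is needed.
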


\begin{proof}
   We begin by applying the chain rule for Stratonovich integrals to equation \eqref{eq DMZ Stra form}. This yields:
\begin{equation}\label{eq:chain_rule}
\begin{split}
    d&u(t,x) = d(\exp\left( - \sum_{i=1}^m h^i(x) Y^i_t \right))\sigma(t,x) \\&+\exp\left( - \sum_{i=1}^m h^i(x) Y^i_t \right) d\sigma(t,x)\\
    &= -\sum_{j=1}^mh^j(x)\exp\left( - \sum_{i=1}^m h^i(x) Y^i_t \right)\sigma(t,x)\circ dY_t^j \\
    &\quad + \exp\left( - \sum_{i=1}^m h^i(x) Y^i_t \right)L_0 \sigma(t, x) dt \\
    &\quad + \sum_{i=1}^m h^i(x) \exp\left( - \sum_{i=1}^m h^i(x) Y^i_t \right)\sigma(t, x)\circ dY^i_t \\
    & = \exp\left( - \sum_{i=1}^m h^i(x) Y^i_t \right)L_0\sigma(t, x)dt\\
    & = \left[\exp\left( - \sum_{i=1}^m h^i(x) Y^i_t \right)L_0\exp\left(\sum_{i=1}^m h^i(x) Y^i_t \right)\right] u(t,x)dt.
\end{split}
\end{equation}

Notably, the stochastic terms cancel out, indicating that $u(t,x)$ satisfies a partial differential equation. Our task now is to explicitly calculate the operator 
\[
\exp\left( - \sum_{i=1}^m h^i(x) Y^i_t \right)L_0\exp\left(\sum_{i=1}^m h^i(x) Y^i_t \right).\]

Let $K(t,x) = \sum_{i=1}^m h^i(x) Y^i_t$. Applying the Baker-Campbell-Hausdorff formula, we obtain:
\begin{equation}\label{eq:BCH}
    \begin{split}
        e^{-K(t,x)}L_0e^{K(t,x)}
       &= L_0 + [L_0, K(t,x)] \\&+ \frac{1}{2!}[[L_0, K(t,x)],K(t,x)] \quad + \cdots \\&+ \frac{1}{n!}[\underbrace{[\ldots[L_0, K(t,x)], \ldots K(t,x)]}_{\text{$n$ nested commutators}}] + \cdots
    \end{split}
\end{equation}

Now, consider the order of these terms. We have:
\begin{equation}\label{eq:order}
\begin{split}
    \ord([[L_0, K(t,x)],K(t,x)]) &\leq \ord([L_0, K(t,x)]) - 1\\ &\leq \ord(L_0) -2 = 0.
\end{split}
\end{equation}

From equation \eqref{eq:order}, we can deduce that all terms on the right-hand side of equation \eqref{eq:BCH} are zero starting from the fourth term. Therefore, we can simplify our expression to:\begin{equation}\label{eq:simplified_PDE}
     \frac{\partial u}{\partial t} = \left(L_0 + [L_0, K(t,x)] + \frac{1}{2!}[[L_0, K(t,x)],K(t,x)]\right) u 
\end{equation}

Finally, we substitute $K(t,x) = \sum_{i=1}^m h^i(x) Y^i_t$ back into equation \eqref{eq:simplified_PDE}, which completes the proof.
\end{proof}

The above proposition forms the basis of estimation algebra techniques, due to the method of Wei and Norman \cite{wei1964global}.  In particular, the above equations define a linear PDE with stochastic coefficients, and Wei and Norman show that if the Lie algebra generated by the coefficient operators form a finite-dimensional Lie algebra, the resulting PDE can be integrated explicitly as a product of exponentials.

\begin{definition}\label{def: estimation algebra}
    The estimation algebra $\calE$ is the Lie algebra inside the algebra of differential operators generated by the operators $L_0, L_1, \dots, L_m$, where $L_0 = L^* - \frac{1}{2} h^\top h$ and $L_i$ are multiplication by $h^i$.
\end{definition}

\section{Nonexistence of finite-dimensional estimation algebras}

We are now ready to give the proof of Theorem~\ref{Main Theorem}.  With the preliminaries established in the previous section, it suffices to prove the following theorem.

\begin{theorem}\label{thm:non exist}
    Suppose that $M$ is a closed Riemannian manifold and $h \colon M \rightarrow \R^m$ is smooth and non-constant. Then the estimation algebra $\calE$ is infinite-dimensional.
\end{theorem}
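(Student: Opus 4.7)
The plan is to exhibit operators in $\calE$ of every nonnegative order, which immediately forces $\calE$ to be infinite-dimensional. Since $h$ is non-constant, fix an index $i$ with $h^i$ non-constant, and let $L_i$ denote multiplication by $h^i$. Define the iterated commutators
\[
D_n := \underbrace{[L_0,[L_0,\dots,[L_0,L_i]\dots]]}_{n \text{ nested brackets}}, \qquad n \ge 0.
\]
Each $D_n$ lies in $\calE$ by definition. The standard estimate $\ord([D,D']) \le \ord(D) + \ord(D') - 1$, together with $\ord(L_0) = 2$ and $\ord(L_i) = 0$, gives $\ord(D_n) \le n$ inductively.

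The central computation is the top-order symbol of $D_n$. The principal symbol of $L_0 = \tfrac{1}{2}\Delta_M + (\text{lower-order})$ in order two is the geodesic Hamiltonian $p(x,\xi) = \tfrac{1}{2}|\xi|_g^2$ on $T^*M$; the first- and zeroth-order parts of $L_0$ contribute only to symbols of lower order. Using the standard compatibility $\sigma_{k+1}([L_0,D]) = \{p,\sigma_k(D)\}$ inductively,
\[
\sigma_n(D_n) = \{p,\{p,\dots,\{p,h^i\}\dots\}\} \in C^{\infty}(T^*M).
\]
Since $\{p,\cdot\}$ differentiates along the Hamiltonian flow of $p$, which is the geodesic flow, the symbol admits the geometric description
\[
\sigma_n(D_n)(x,\xi) \;=\; \frac{d^n}{dt^n}\bigg|_{t=0} h^i\bigl(\gamma_{x,\xi^\sharp}(t)\bigr),
\]
where $\gamma_{x,v}$ denotes the geodesic on $M$ with initial position $x$ and initial velocity $v$.

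The key lemma I would then establish is that $\sigma_n(D_n)$ is not identically zero for any $n \ge 1$. Suppose, for contradiction, that it were. Evaluating the identity $\sigma_n(D_n) \equiv 0$ at the points $\bigl(\gamma(s),\gamma'(s)^\flat\bigr)$ along a fixed geodesic $\gamma$ yields $\frac{d^n}{dt^n}\big|_{t=s} h^i(\gamma(t)) = 0$ for every $s \in \R$, so $t \mapsto h^i(\gamma(t))$ is a polynomial of degree less than $n$. But $M$ is closed, so this polynomial is bounded on $\R$, hence constant. Thus $h^i$ is constant along every complete geodesic; since every tangent vector at every point is the initial velocity of some geodesic, $dh^i \equiv 0$, contradicting non-constancy of $h^i$.

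With the key lemma in hand, each $D_n$ has order exactly $n$, so the family $\{D_n\}_{n \ge 0}$ is linearly independent in the filtered algebra $D^\infty(M)$ (operators of pairwise distinct orders are linearly independent by their principal symbols). Hence $\calE$ contains an infinite linearly independent family and is infinite-dimensional. The main obstacle is the key lemma: it is the only place the hypothesis that $M$ is closed is essential, and it is the step that fails for $M = \R^n$ with a linear $h$, which explains why Theorem~\ref{Generalization of Main Theorem} must instead invoke gradient-flow connections between critical points rather than geodesic completeness.
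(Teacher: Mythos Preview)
Your proof is correct and takes a genuinely different route from the paper's.

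The paper stays entirely within order-zero operators: it sets $H_{i+1}=[[L_0,H_i],H_i]=\langle\grad H_i,\grad H_i\rangle$, so every $H_i$ is a function, and then proves linear independence of the sequence $h^j, Qh^j, Q^2h^j,\dots$ (where $Q(f)=|\grad f|^2$) by a triangular evaluation argument on critical points: $Q^if$ vanishes at every critical point of $f$ (hence inductively at the critical points of all earlier $Q^kf$), yet attains a strictly positive maximum at some new point $x_i$, so the matrix $\bigl(Q^ih^j(x_k)\bigr)$ is upper-triangular with nonzero diagonal.

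You instead climb the order filtration via $D_n=\operatorname{ad}_{L_0}^n(h^i)$ and identify $\sigma_n(D_n)$, through the symbol calculus, with the $n$th derivative of $h^i$ along geodesics; compactness forces the bounded function $t\mapsto h^i(\gamma(t))$, once polynomial, to be constant. Your argument is more conceptual---it invokes the cogeodesic flow as the Hamiltonian flow of the principal symbol of $L_0$---and it makes transparent \emph{why} linear or quadratic $h$ on $\R^n$ escape the conclusion: those are exactly the functions whose restrictions to straight lines have bounded degree. The paper's argument is more elementary (no symbol calculus, only pointwise critical-point reasoning) and has the structural advantage that its basic mechanism reappears, in the single-bracket form $A_h(f)=\langle\grad h,\grad f\rangle$, in the proof of the non-compact generalization, where gradient-flow lines between critical points play the role your geodesics play here.
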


\begin{proof}
    Suppose $h^j$ is non-constant for some $j$.  Let $H_i$ be defined by
    \begin{equation}
        \begin{aligned}
            H_0 &:= h^j\\
            H_{i+1} &:= [[L_0, H_i],H_i] = \langle \grad H_i, \grad H_i \rangle
        \end{aligned}
    \end{equation}
We can define a non-linear operator $Q: f \in C^{\infty}(M)\rightarrow Q(f) =  \langle \grad f, \grad f \rangle \in C^{\infty}(M)$.  We claim that if $h$ is nonconstant and smooth, then the sequence $\{ h, Qh, Q^2 h, \dots \}$ is linearly independent. 

\begin{lemma}\label{lemma:operator_Q}
For the operator $Q$ on a compact Riemannian manifold $M$:
\begin{enumerate}
    \item $Qf = 0$ if and only if $f$ is constant.
    \item There are no functions $f,g$ such that $\langle\grad f, \grad g\rangle = 1$ everywhere on $M$.
\end{enumerate}
\end{lemma}

\begin{proof}[Proof of Lemma \ref{lemma:operator_Q}]
 If $f$ is non-constant, then $df$ is not identically zero. Consequently, $\grad f$ cannot be identically zero, which implies $\langle\grad f, \grad f\rangle$ is not identically zero. Conversely, if $f$ is constant, then $\grad f = 0$, and thus $Qf = 0$. So, we have proved the first one.
    
Since $M$ is compact, every $f \in C^{\infty}(M)$ has at least one critical point. Let $x_0$ be a critical point of $f$, i.e., $\grad f(x_0) = 0$. Then, $0 = \langle\grad f(x_0), \grad g(x_0)\rangle \neq 1$.

\end{proof}
By using Lemma \ref{lemma:operator_Q}, $Q^if$ can not be an identically zero function for any $i\ge 1$ for any non-constant function $f$. We note that for any smooth function $f$, $Qf \ge 0$, is equal to $0$ at critical points of $f$, and $Qf$ must be nonzero.  In particular, the critical points of $f$ are also critical points of $Qf$, so inductively, if $x$ is a critical point of $f$, then $x$ is a critical point of $Q^i(f)$ for all $i$ and $Q^i(f) = 0$ for all $i \ge 1$.  Since $M$ is compact, $Qf$ must have at least one critical point that is not a critical point of $f$ itself, for example, the maximum value of $Qf$.  Let $x_i$ be a sequence so that $Q^i h(x_i) \ne 0$ and $x_i$ is a critical point of $Q^i h$.  By construction $Q^i h(x_i) \ne 0$ and $Q^j h(x_i) = 0$ for all $j > i$.  Thus, we observe that the matrix $A_{ij} = Q^i h(x_j)$ is upper triangular and nonzero on the diagonal, hence nonsingular.  Since each $H_i = Q^i h$ lives in the estimation algebra, this shows that the image of $\mathcal{E}$ under the map $f \mapsto (f(x_1), \dots, f(x_n)) \in \R^n$ spans the image for any $n$, so $\mathcal{E}$ must be infinite-dimensional.

\end{proof}

 \begin{remark}
  In Euclidean space, Ocone showed that any functions $h$ in a finite-dimensional estimation algebra are polynomials of order $\le 2$. If $h$ is a polynomial of degree $\le 2$, then $Q^i h$ is still a polynomial of degree $\le 2$, but the polynomial degree increases arbitrarily for $h$ a polynomial of degree $\ge 3$. This observation was used by Ocone to rule out these polynomial cases in the course of proving that any observation function on Euclidean space must be a polynomial of degree $\le 2$ \cite[Theorem~5.3]{ocone1980topics}.\end{remark}

With a bit more work, the idea above generalizes to Theorem~\ref{Generalization of Main Theorem}.

\begin{theorem}
    Let $M$ be a Riemannian manifold (not necessarily compact) and let $h \colon M \rightarrow \R$ be an observation function such that $h$ has at least two distinct critical points $x$ and $y$ that are connected by a gradient flow along $h$.  Then the estimation algebra generated by $L_0$ and $h$ is infinite-dimensional.
\end{theorem}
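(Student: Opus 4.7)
The plan is to exhibit an explicit countable sequence of multiplication operators in $\calE$ and show it is linearly independent by restricting to the gradient flow $\gamma$.  The sequence to use is \emph{not} the iterated $Q^n h$ of the compact argument but the directional iterates $\{D^n h\}_{n \ge 0}$, where $D \colon C^\infty(M) \to C^\infty(M)$ is the operator $D f := \langle \grad h, \grad f \rangle$.  My first step is to verify the operator identity
\[ [[L_0, h], f] \;=\; Df \qquad \text{(as multiplication by the function $Df$)} \]
for every $f \in C^\infty(M)$.  This is a short calculation using that $[L_0, h]$ is a first-order differential operator whose principal part is the derivation $\grad h$, while its zeroth-order part commutes with multiplication by $f$; the outer commutator kills the zeroth-order part and returns multiplication by $Df = \grad h(f)$.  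Iterating from $h \in \calE$ yields $D^n h = [[L_0, h], D^{n-1} h] \in \calE$ for every $n \ge 0$.

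Next I rewrite linear independence in terms of the restriction to $\gamma$.  Since $\gamma'(t) = \grad h$, the chain rule gives $\frac{d}{dt} f(\gamma(t)) = (Df)(\gamma(t))$ for every smooth $f$, and hence by induction
\[ (D^n h)(\gamma(t)) \;=\; \frac{d^n}{dt^n} a_0(t), \qquad a_0(t) := h(\gamma(t)). \]
The function $a_0$ is smooth, strictly increasing on $\R$ (because $a_0'(t) = |\grad h(\gamma(t))|^2 > 0$ for every finite $t$; otherwise $\gamma$ would be stationary and hence constant, contradicting $x \ne y$), and has distinct finite limits $a_0(-\infty) = h(x)$ and $a_0(+\infty) = h(y)$.

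Finally, suppose for contradiction that $\sum_{n=0}^{N} c_n D^n h \equiv 0$ on $M$ for some nonzero coefficient vector $(c_n)$.  Restricting to $\gamma$ forces $a_0$ to satisfy the nontrivial constant-coefficient linear ODE $\sum_n c_n a_0^{(n)} = 0$ on all of $\R$.  The bounded solutions of any such ODE on the whole line are finite linear combinations of $1$, $\cos(\omega t)$, and $\sin(\omega t)$ arising from simple purely imaginary roots of the characteristic polynomial $\sum c_n \lambda^n$; any root with nonzero real part produces a component unbounded at one of $\pm\infty$, and any repeated imaginary root introduces a polynomial-growth factor that is also unbounded.  A nonconstant trigonometric polynomial of this form is almost periodic and has no limit at $+\infty$, so the existence of the finite limit $a_0(+\infty) = h(y)$ forces $a_0$ to be constant---contradicting $h(x) \ne h(y)$.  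Therefore $\{D^n h\}_{n \ge 0}$ is linearly independent in $\calE$, and $\calE$ is infinite-dimensional.  The main obstacle, and the genuinely new ingredient beyond the compact proof, is the choice of iteration: $Q$ does not intertwine restriction to $\gamma$ with $\tfrac{d}{dt}$, so $\{Q^n h\}$ would still require the global compactness-style arguments that are no longer available on a possibly noncompact $M$, whereas $D$ intertwines these two operations exactly and converts a potentially delicate global linear-independence problem on $M$ into a sharp one-dimensional ODE rigidity argument.
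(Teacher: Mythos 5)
Your proposal is correct and follows essentially the same route as the paper: the paper defines the same operator ($A_h(f) = \langle \grad h, \grad f\rangle$, your $D$), places its iterates in $\calE$ via the double commutator with $L_0$, restricts to the gradient flow so that $D$ becomes $d/dt$, and derives a contradiction from the asymptotics of exponential-polynomial solutions of the resulting constant-coefficient ODE. The only cosmetic difference is that you phrase the rigidity step via the limits $h(x) \ne h(y)$ of $a_0 = h \circ \gamma$, whereas the paper uses the fact that $a_0' = A_h(h)\circ\gamma$ is positive yet tends to $0$ at both ends; both rest on the same standard fact about bounded exponential polynomials.
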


\begin{remark}
    Examples of functions $h$ that satisfy the setup above arise in Morse theory when there is at least one nonzero cohomology class in $H^i$ for some $i > 0$, with any coefficients, such as the fundamental class on a compact manifold with $\Z/2$ coefficients.  From a Morse-theoretic perspective, the above theorem can be interpreted as saying that finite-dimensional estimation algebras can only arise in situations where the observation function cannot ``see'' any nontrivial topology.
\end{remark}

\begin{proof}
    Let $A_h(f)$ be the operator $\langle \grad h, \grad f \rangle$.  We claim that the sequence $\{ A_h(h), A_h^2(h), A_h^3(h), \dots \}$ is linearly independent.

    Let $\gamma \colon \R \rightarrow M$ be a parametrization of the gradient flow so that $\gamma(t)$ satisfies the differential equation
    \begin{equation}
        \begin{aligned}
            \gamma'(t) &= (\grad h)(\gamma(t)) \\
            \lim_{t \rightarrow a} \gamma(t) &= x \\
            \lim_{t \rightarrow b} \gamma(t) &= y.
        \end{aligned}
    \end{equation}

    We may assume that there are no critical points along the interior of the path $\gamma(t)$.  Therefore, $A_h(h)(\gamma(t)) > 0$ for all $t$ and approaches $0$ at $\pm \infty$.  Therefore, it must have a maximum.  We also observe that
    \begin{equation}
    \begin{aligned}
        \frac{d}{dt} A_h^n h(\gamma(t)) &= d(A_h^n(h))(\gamma'(t)) \\
        &= \langle \grad A_h^n(h), \grad h \rangle \\
        &= A_h^{n+1}(h)(\gamma(t)).
    \end{aligned}
    \end{equation}
    If the estimation algebra were finite-dimensional, then there would be a linear dependency of the form $\sum_{i=1}^N c_i A_h^{i}(h) = 0$ for sufficiently large $n$ and at least one nonzero coefficients $c_N \ne 0$.  This would imply that $f(t) = A_h(h)(\gamma(t))$ satisfies a nontrivial linear ordinary differential equation in $t$
    \[
    c_0 f(t) + c_1 f'(t) + c_2 f''(t) + \dots + c_N f^{(N)}(t) = 0, \quad \forall t \in \mathbb{R}.
    \]
    with constant coefficients and $c_N \ne 0$.  But every such solution is a linear combination of the form $C_i P_i(t) e^{\lambda_i t}$ for some complex $C_i, \lambda_i$ and polynomials $P_i$.  However, no linear combination can satisfy $\lim_{t \rightarrow \pm \infty} f(t) = 0$ at both $\pm \infty$.  This is a contradiction, so $\mathcal{E}$ cannot be finite-dimensional.
\end{proof}

\section{Conclusion}

In this paper, we have explored the problem of finite-dimensional filtering on closed Riemannian manifolds. Our main result establishes the nonexistence of finite-dimensional estimation algebras for stochastic systems defined on compact manifolds. This finding extends prior work on finite-dimensional filters, particularly in Euclidean spaces. Our conclusion can be extended to general non-compact manifolds under appropriate conditions. Our work demonstrates that a finite-dimensional estimation algebra structure can only exist when the observation function is unable to detect the non-trivial topological structure of the manifold.

\bibliographystyle{IEEEtran}
\bibliography{root}

% Generated by IEEEtran.bst, version: 1.14 (2015/08/26)
\begin{thebibliography}{10}
\providecommand{\url}[1]{#1}
\csname url@samestyle\endcsname
\providecommand{\newblock}{\relax}
\providecommand{\bibinfo}[2]{#2}
\providecommand{\BIBentrySTDinterwordspacing}{\spaceskip=0pt\relax}
\providecommand{\BIBentryALTinterwordstretchfactor}{4}
\providecommand{\BIBentryALTinterwordspacing}{\spaceskip=\fontdimen2\font plus
\BIBentryALTinterwordstretchfactor\fontdimen3\font minus \fontdimen4\font\relax}
\providecommand{\BIBforeignlanguage}[2]{{%
\expandafter\ifx\csname l@#1\endcsname\relax
\typeout{** WARNING: IEEEtran.bst: No hyphenation pattern has been}%
\typeout{** loaded for the language `#1'. Using the pattern for}%
\typeout{** the default language instead.}%
\else
\language=\csname l@#1\endcsname
\fi
#2}}
\providecommand{\BIBdecl}{\relax}
\BIBdecl

\bibitem{kalman1960new}
R.~E. Kalman, ``A new approach to linear filtering and prediction problem,'' \emph{Journal of Fluids Engineering}, vol.~82, pp. 35--45, 1960.

\bibitem{kalman1961new}
R.~E. Kalman and R.~S. Bucy, ``New results in linear filtering and prediction theory,'' \emph{Journal of Fluids Engineering}, vol.~83, pp. 95--108, 1961.

\bibitem{duncan1967probability}
T.~E. Duncan, ``Probability densities for diffusion processes with applications to nonlinear filtering theory and detection theory,'' Stanford University, Stanford, CA, USA, Technical Report TR-7001-4, May 1967.

\bibitem{mortensen1966optimal}
R.~E. Mortensen, ``Optimal control of continuous time stochastic systems,'' Ph.D. dissertation, University of California, Berkeley, Berkeley, CA, USA, August 1966, electronic Research Laboratory.

\bibitem{zakai1969optimal}
M.~Zakai, ``On the optimal filtering of diffusion processes,'' \emph{Zeitschrift f{\"u}r Wahrscheinlichkeitstheorie und verwandte Gebiete}, vol.~11, no.~3, pp. 230--243, 1969.

\bibitem{Brockett1980}
R.~W. Brockett and J.~M.~C. Clark, ``The geometry of the conditional density function,'' in \emph{Analysis and Optimization of Stochastic Systems}.\hskip 1em plus 0.5em minus 0.4em\relax New York: Academic Press, 1980, pp. 299--309.

\bibitem{Brockett1981}
R.~W. Brockett, ``Nonlinear systems and nonlinear estimation theory,'' in \emph{Stochastic Systems: The Mathematics of Filering and Identification and Applications}.\hskip 1em plus 0.5em minus 0.4em\relax Dordrecht: Reidel, 1981, pp. 479--504.

\bibitem{Mitter1979}
S.~K. Mitter, ``On the analogy between mathematical problems of nonlinear filtering and quantum physics,'' \emph{Ricerche Automat.}, vol.~10, pp. 163--216, 1979.

\bibitem{Yau1990}
S.~S.-T. Yau, ``Recent results on nonlinear filtering: New class of finite dimensional filters,'' in \emph{Proc. 29th Conf. Dec. Contr. at Honolulu, Hawaii}, 1990, pp. 231--233.

\bibitem{YauRasoulian1999}
S.~S.-T. Yau and A.~Rasoulian, ``Classification of four-dimensional estimation algebras,'' \emph{IEEE Trans. on Automat. Contr.}, vol.~44, pp. 2312--2318, 1999.

\bibitem{WuYauHu2002}
X.~Wu, S.~S.-T. Yau, and G.~Q. Hu, ``Finite dimensional filters with nonlinear drift {XII}: Linear and constant structure of $\omega$,'' in \emph{Stochastic Theory and Control, LNCIS 280, Springer-Verlag}, Berlin Heidelberg, 2002, pp. 7507--7518.

\bibitem{Yau2003}
S.~S.-T. Yau, ``Complete classification of finite-dimensional estimation algebras of maximal rank,'' \emph{International Journal of Contr.}, vol.~76, pp. 657--677, 2003.

\bibitem{WuYau2006}
X.~Wu and S.~S.-T. Yau, ``Classification of estimation algebras with state dimension 2,'' \emph{SIAM J. Control Optim.}, vol.~45, pp. 1039--1073, 2006.

\bibitem{ShiYau2017}
J.~Shi and S.~S.~T. Yau, ``Finite dimensional estimation algebras with state dimension 3 and rank 2. {I}: Linear structure of wong matrix,'' \emph{SIAM Journal on Control and Optimization}, vol.~55, no.~6, pp. 4227--4246, 2017.

\bibitem{jiao2020new}
X.~P. Jiao and S.~S.-T. Yau, ``New classes of finite dimensional filters with nonmaximal rank estimation algebra on state dimension n and linear rank $n-2$,'' \emph{SIAM Journal on Control and Optimization}, vol.~58, no.~6, pp. 3413--3427, 2020.

\bibitem{kang2023finite}
J.~Kang, X.~Jiao, and S.~S.-T. Yau, ``Finite dimensional estimation algebra for time-varying filtering system and optimal transport particle filter: A tangent flow point of view,'' \emph{IEEE Transactions on Aerospace Electronic Systems}, vol.~59, no.~6, pp. 8005--8021, 2023.

\bibitem{duncan1977filtering}
T.~E. Duncan, ``Some filtering results in riemann manifolds,'' \emph{Information and Control}, vol.~35, no.~3, pp. 182--195, 1977.

\bibitem{ng1985nonlinear}
S.-K. Ng and P.~E. Caines, ``Nonlinear filtering in riemannian manifolds,'' \emph{IMA Journal of Mathematical Control and Information}, vol.~2, no.~1, pp. 25--36, Jan 1985.

\bibitem{solo2009nonlinear}
V.~Solo, ``On nonlinear state estimation in a riemannian manifold,'' in \emph{Proceedings of the 48th IEEE Conference on Decision and Control}.\hskip 1em plus 0.5em minus 0.4em\relax Shanghai, China: IEEE, 2009, pp. 8500--8505.

\bibitem{tompkins2007bayesian}
F.~Tompkins and P.~J. Wolfe, ``Bayesian filtering on the stiefel manifold,'' in \emph{2nd IEEE International Workshop on Computational Advances in Multi-Sensor Adaptive Processing}.\hskip 1em plus 0.5em minus 0.4em\relax IEEE, 2007, pp. 261--264.

\bibitem{srivastava2004bayesian}
A.~Srivastava and E.~Klassen, ``Bayesian and geometric subspace tracking,'' \emph{Advances in Applied Probability}, vol.~36, no.~1, pp. 43--56, 2004.

\bibitem{snoussi2013particle}
H.~Snoussi, ``Particle filtering on riemannian manifolds. application to covariance matrices tracking,'' in \emph{Matrix Information Geometry}.\hskip 1em plus 0.5em minus 0.4em\relax Berlin, Heidelberg: Springer, 2013, pp. 427--449.

\bibitem{hauberg2013unscented}
S.~Hauberg, F.~Lauze, and K.~S. Pedersen, ``Unscented kalman filtering on riemannian manifolds,'' \emph{Journal of Mathematical Imaging and Vision}, vol.~46, no.~1, pp. 103--120, 2013.

\bibitem{de2018lie}
F.~C. DE~VECCHI, ``Lie symmetry analysis and geometrical methods for finite and infinite dimensional stochastic differential equations,'' Ph.D. dissertation, Universita degli Studi di Milano, 2018.

\bibitem{albeverio2019some}
S.~Albeverio and F.~C. De~Vecchi, ``Some recent developments on lie symmetry analysis of stochastic differential equations,'' in \emph{International Conference on Random Transformations and Invariance in Stochastic Dynamics}.\hskip 1em plus 0.5em minus 0.4em\relax Springer, 2019, pp. 1--24.

\bibitem{bain2009fundamentals}
A.~Bain and D.~Crisan, \emph{Fundamentals of stochastic filtering}.\hskip 1em plus 0.5em minus 0.4em\relax Springer, 2009, vol.~3.

\bibitem{hsu2002stochastic}
E.~P. Hsu, \emph{Stochastic analysis on manifolds}.\hskip 1em plus 0.5em minus 0.4em\relax American Mathematical Soc., 2002, no.~38.

\bibitem{gyongy1997stochastic}
I.~Gy{\"o}ngy, ``Stochastic partial differential equations on manifolds\~{} ii. nonlinear filtering,'' \emph{Potential Analysis}, vol.~6, no.~1, pp. 39--56, 1997.

\bibitem{davis1980multiplicative}
M.~H. Davis, ``On a multiplicative functional transformation arising in nonlinear filtering theory,'' \emph{Zeitschrift f{\"u}r Wahrscheinlichkeitstheorie und verwandte Gebiete}, vol.~54, pp. 125--139, 1980.

\bibitem{wei1964global}
J.~Wei and E.~Norman, ``On global representations of the solutions of linear differential equations as a product of exponentials,'' \emph{Proceedings of the American Mathematical Society}, vol.~15, no.~2, pp. 327--334, 1964.

\bibitem{ocone1980topics}
D.~Ocone, ``Topics in nonlinear filtering theory,'' Ph.D. dissertation, Massachusetts Institute of Technology, 1980.

\end{thebibliography}

% that's all folks
\end{document}